\newtheorem{theorem}{Theorem}[section]
\newtheorem{lemma}[theorem]{Lemma}
\newtheorem{corollary}[theorem]{Corollary}
\newtheorem{proposition}[theorem]{Proposition}
\theoremstyle{definition}
\newtheorem{remark}[theorem]{Remark}
\newcommand{\M}{\mbox{\rm M}}
\newcommand{\trdeg}{\mbox{\rm trdeg }}
\newcommand{\End}{\mbox{\rm End}}
\newcommand{\I}{\mbox{\rm I}}
\newcommand{\N}{\mathbb N}
\newcommand{\Z}{\mathbb{Z}}
\newcommand{\Q}{\mathbb{Q}}
\newcommand{\GL}{{\rm GL}}
\newcommand{\Gal}{{\rm Gal}}
\newcommand{\Char}{\mbox{{\rm char}}}
\begin{document}
    \title[On subnormal subgroups containing a solvable subgroup]{On subnormal subgroups  in division rings containing a non-abelian solvable subgroup}
    \author[Bui Xuan Hai]{Bui Xuan Hai}\author[Mai Hoang Bien]{Mai Hoang Bien}
   \email{bxhai@hcmus.edu.vn;  mhbien@hcmus.edu.vn}
\address{Faculty of Mathematics and Computer Science, VNUHCM-University of Science,  227 Nguyen Van Cu Str., Dist. 5, Ho Chi Minh City, Vietnam.}

\keywords{subnormal subgroups, free subgroup, solvable subgroup, division ring, quotient division ring. \\
\protect \indent 2010 {\it Mathematics Subject Classification.} 16K20, 20E05, 16S85, 16S36.}

 \maketitle

 \begin{abstract} Let $D$ be a division ring with center $F$ and $N$ a subnormal subgroup of the multiplicative group $D^*$ of $D$.  Assume that $N$ contains a non-abelian solvable subgroup. In this paper, we study the problem on the existence of non-abelian free subgroups in $N$. In particular, we show that if either $N$ is algebraic over  $F$ or $F$ is uncountable, then $N$ contains a non-abelian free subgroup. 
\end{abstract}
\section{Introduction}        
Let $D$ be a division ring with center $F$ and $D^*$ denotes the multiplicative group of $D$. Among the classical results  on solvable subgroups in division rings, we note the earliest theorem due to L. K. Hua \cite{hua} which states that if $D^*$ is solvable, then $D$ is commutative. Since then, there are many works devoted to this subject. In 1960, Huzubazar \cite{huzu} showed that in  division rings there are no non-central locally nilpotent subnormal subgroups. Later, Stuth \cite{stuth} proved that if a subnormal subgroup $G$ of $D^*$ is solvable, then $G$ must be central, that is, $G\subseteq F$. Several theorems of such a kind of other authors one can find, for example in \cite{hai,hai-ha,hai-thin1,hai-thin2,stuth} and in the references therein. This subject has close connection to  the problem on the existence of non-abelian free subgroups in division rings. For instance, in \cite[Theorem 2.2.1]{Pa_HaMaMo_14}, it was proved that in the case when $D$ is centrally finite, that is, $\dim_FD<\infty$, a subgroup $G$ of $D^*$ contains no non-abelian free subgroup if and only if $G$ contains a solvable normal subgroup of finite index. Our paper follows this side. Namely, we establish some relations between the existence of  non-abelian solvable subgroups and non-abelian free subgroups in subnormal subgroups of $D^*$. Note that this subject is not quite strange. Indeed, in \cite[Theorem 2]{Pa_Li_78}, Lichtman showed that if a normal subgroup of $D^*$ contains a non-abelian nilpotent-by-finite subgroup, then it contains a non-abelian free subgroup. These results were extended in \cite{Pa_Go_Pa_15} for a normal subgroup of $D^*$ containing a non-abelian solvable subgroup with some additional conditions.  In this paper, we consider a subnormal subgroup of $D^*$ containing a non-abelian solvable subgroup instead of a normal subgroup containing a non-abelian nilpotent-by-finite subgroup. In fact, assume that $N$ is a subnormal subgroup of $D^*$ containing a non-abelian solvable subgroup. We will prove that if $N$ is algebraic over $F$ or the transcendence degree  of $F$ over its prime subfield $\mathbb{P}$ is infinite, then $N$ contains a non-abelian free subgroup (see theorems~\ref{4.1}, ~\ref{4.2} in the text).

 The ideas of techniques of proofs in this paper are taken from \cite{Pa_Go_Pa_15}, and they can be shortly explained as the following: assume that $N$ is a subnormal subgroup of $D^*$ and $G$ is a non-abelian solvable subgroup of $N$. Then, there exists a division subring $D_1$ of $D$ which is isomorphic to the quotient division ring $K(t,\sigma)$ of a skew polynomial ring $K[t,\sigma]$ in $t$ over some subfield $K$ with respect to an automorphism field $\sigma$ of $K$, and $D_1\cap N$ contains $t$.  In some special cases, we may continue to choose a division subring $D_2$ of $D_1$ such that $D_2$ is isomorphic to the quotient division ring $L(x,y)$ of a polynomial ring $L[x,y]$ in two commutating indeterminates over a suitable division ring $L$. The construction of $D_1$ and $D_2$ will be presented in Section 4 and with the above reason, in Section 2, we consider the existence of non-abelian free subgroups of subnormal subgroups in the quotient division ring of a skew polynomial ring in one indeterminate over a field. In Section 3, our base division ring is the quotient division ring $D(x,y)$ of a polynomial ring $D[x,y]$ in two commuting indeterminates over a division ring $D$. 

Remark that recently, the problem on the existence of free subobjects in the quotient division rings of skew polynomial rings have been received a considerable attention. For instance, the existence of free subalgebra in the quotient division rings of Ore polynomial rings was studied in \cite{Pa_BeGo_16, Pa_BeRo_14, Pa_BeRo_12,Pa_Go_Pa_15} while in \cite{Pa_Go_17}, Gon\c calves dealt with the existence of  non-abelian free subgroups of a normal subgroup of the quotient division ring of a certain skew polynomial ring. Hence, the next section of the present paper is itself an interesting topic.

\section{The existence of free subgroups in $K(x,\sigma)$}

Let $G$ be a group. If $a,b\in G$, then we denote $[{}_1a,b]=aba^{-1}b^{-1}$ and $[{}_{i+1}a,b]=[a, [{}_ia,b]]$ for an integer  $i\geq 1$. The following lemma will be used in the next. 
\begin{lemma}\label{3.1} Let $G$ be a group, and assume that $N$ is a subnormal subgroup of $G$ with a series of subgroups
$$N=N_n\triangleleft N_{n-1}\triangleleft \cdots \triangleleft N_0=G.$$ 
	
If $a\in N$ and $b\in G$, then $[{}_ia,b]\in N_i$ for every positive integer $i$. In particular, $[{}_na,b]\in N$.
\end{lemma}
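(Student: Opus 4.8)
The plan is to prove the claim by induction on $i$. The base case $i=1$ is immediate: since $a \in N = N_n \subseteq N_1$ and $N_1 \triangleleft N_0 = G$, the commutator $[{}_1a,b] = aba^{-1}b^{-1} = (ab a^{-1}) b^{-1}$ lies in $N_1$, because $aba^{-1}$... wait, that is not quite it; rather one writes $[{}_1a,b] = a \cdot (b a^{-1} b^{-1})$, and since $N_1$ is normal in $G$ and $a^{-1} \in N_1$, the conjugate $b a^{-1} b^{-1} \in N_1$, hence the product with $a \in N_1$ stays in $N_1$. Actually it is cleanest to just say: $N_1 \trianglelefteq G$ and $a \in N_1$ forces $[a,b] \in N_1$ for all $b \in G$, a standard fact.

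For the inductive step, suppose $[{}_ia,b] \in N_i$ for some $i \geq 1$ with $i < n$ (so that $N_{i+1} \trianglelefteq N_i$ is available). By definition $[{}_{i+1}a,b] = [a, [{}_ia,b]] = a\,c\,a^{-1}\,c^{-1}$ where $c := [{}_ia,b] \in N_i$. The key point is that $a \in N = N_n \subseteq N_{i+1}$, and $N_{i+1}$ is normal in $N_i$; since $c \in N_i$, conjugation by $c$ preserves $N_{i+1}$, so $c\,a^{-1}\,c^{-1} \in N_{i+1}$, and multiplying by $a \in N_{i+1}$ we get $[{}_{i+1}a,b] \in N_{i+1}$. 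This completes the induction, and taking $i = n$ gives $[{}_na,b] \in N_n = N$.

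There is really no serious obstacle here; the only thing to be careful about is the indexing. One must check that at stage $i+1$ we only invoke the normality $N_{i+1} \trianglelefteq N_i$, which is part of the given subnormal series for $i+1 \leq n$, and that the element $a$ being commutated in lies in $N_{i+1}$ — which holds precisely because $a \in N = N_n$ sits inside every term $N_j$ of the series with $j \leq n$. The statement only claims $[{}_ia,b] \in N_i$ for positive integers $i$, so for $i > n$ one either stops at $i = n$ (which is the "in particular" part) or notes $N_i$ should be read as $N$ for $i \geq n$; either reading is handled by the same argument since $[{}_{i+1}a,b] = [a, [{}_ia,b]]$ with both $a$ and $[{}_ia,b]$ in $N$ once we have reached $N$, and $N$ is a group.
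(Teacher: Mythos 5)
Your proof is correct and is precisely the induction the paper has in mind (the paper itself only says ``the proof is simple by induction on $i$'' and omits the details you supply). Both the base case, using $a\in N_1\trianglelefteq G$, and the inductive step, using $a\in N_{i+1}\trianglelefteq N_i$ and $[{}_ia,b]\in N_i$, are handled correctly.
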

\begin{proof} The proof is simple by induction on $i$.
\end{proof}
Let $R$ be a ring and $\sigma : R\to R$ an endomorphism of $R$. A \textit{skew polynomial ring} in an indeterminate $x$ over  $R$ corresponding to $\sigma$, denoted by $R[x,\sigma]$, is defined as the following: 

As a set, $R[x,\sigma]$ consists of all polynomials in an indeterminate $x$ with coefficients taken from $R$, that is,  $$R[x,\sigma]=\{a_0+a_1x+\cdots+a_nx^n\mid n\in \N, a_i\in R\text{ for every } 1\le i\le n\}.$$
The addition in $R[x,\sigma]$ is usual while polynomials are multiplied formally with the rule  $xa=\sigma(a)x$. If $R=K$ is a field and $\sigma$ is an automorphism, then  $K[x,\sigma]$ is a PLID (\textit{principal left ideal domain}) and also a PRID (\textit{principal right ideal domain}), that is, every one-sided ideal in $K[x,\sigma]$ is generated by one element. In this case, we denote by $K(x,\sigma)$ the quotient division ring of $K[x,\sigma]$. Every element of  $K(x,\sigma)$ has the form $f(x)g^{-1}(x)$, where $f(x),g(x)\in K[x,\sigma]$.

 In this section, we consider the existence of non-abelian free subgroup in a subnormal subgroup of $K(x,\sigma)$ containing $x$. The recent results on the existence of free subobject in $K(x,\sigma)$ can be found in \cite{Pa_BeGo_16,Pa_BeRo_14,Pa_BeRo_12,Pa_Go_17}.
\begin{lemma}\label{2.1} Let $K$ be a field,  and  $\sigma$ an automorphism of $K$ with fixed subfield $E=\{a\in K\mid \sigma (a)=a \}$. If $a\in K$ and  $E(\sigma^n(a)\mid n\in \N)$ is infinitely generated, then the set 
$$S=\{ \sigma^{i_1} (a)\sigma^{i_2} (a)\cdots \sigma^{i_n} (a)\mid n\ge 1, 0\le i_1<i_2,\cdots<i_n \}$$ 
is linear independent over $E$.
\end{lemma}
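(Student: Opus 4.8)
My plan is to prove the stronger statement that, writing $a_j:=\sigma^j(a)$ and $a_T:=\prod_{j\in T}a_j$ for a finite subset $T\subseteq\N$ (with $a_\emptyset:=1$), the elements $a_T$ are linearly independent over $E$ in the strong sense that if $\sum_{l=1}^{n}c_l a_{T_l}=0$ with $T_1,\dots,T_n$ distinct finite subsets of $\N$ and $c_l\in E$, then $c_1=\dots=c_n=0$. Since every element of $S$ equals $a_T$ for some nonempty $T$, and distinct elements of $S$ arise from distinct index sets, this implies the lemma.

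The core of the argument is the following consequence of the hypothesis, which I would prove first: for every $k\ge 0$ one has $a_k\notin L_k$, where $L_k:=E(a_0,\dots,a_{k-1})$ and $L_0:=E$. Suppose instead that $a_k\in L_k$ for some $k$. Since $\sigma$ is an automorphism of $K$ fixing $E$, we have $\sigma(L_k)=E(a_1,\dots,a_k)$, and this is contained in $L_k$ because $a_1,\dots,a_{k-1}\in L_k$ while $a_k\in L_k$ by assumption. Iterating gives $\sigma^{n}(L_k)\subseteq L_k$ for all $n\ge 0$, hence $a_{k+n}=\sigma^{n}(a_k)\in L_k$ for all $n$; combined with $a_0,\dots,a_{k-1}\in L_k$ this yields $E(\sigma^{n}(a)\mid n\in\N)=L_k$, a finitely generated extension of $E$, contradicting the hypothesis. (The hypothesis also forces $a\ne 0$, since $E(\sigma^{n}(0)\mid n)=E$; consequently every $a_j$ is nonzero.)

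The second step is an induction on $M:=\max(T_1\cup\dots\cup T_n)$, taking the case in which all $T_l$ are empty (so $n=1$ and the relation reads $c_1=0$) as the base. For the inductive step, split the terms according to whether $M\in T_l$; for those with $M\in T_l$ write $T_l=T_l'\cup\{M\}$ with $T_l'\subseteq\{0,\dots,M-1\}$, so that $a_{T_l}=a_M a_{T_l'}$. Regrouping rewrites the relation as $a_M P+Q=0$, where $P:=\sum_{M\in T_l}c_l a_{T_l'}$ and $Q:=\sum_{M\notin T_l}c_l a_{T_l}$ both lie in $L_M$. If $P\ne 0$, then $a_M=-QP^{-1}\in L_M$, contradicting the first step; hence $P=0$ and $Q=0$. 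As $a_M\ne 0$, multiplication by $a_M$ is injective, so the sets $T_l'$ are distinct; thus $P=0$ and $Q=0$ are relations of the same kind, now with all index sets contained in $\{0,\dots,M-1\}$, and applying the inductive hypothesis to each gives $c_l=0$ for every $l$.

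The step I expect to be the main obstacle is the first one: it is the only place the hypothesis is used, and one must run the $\sigma$-invariance argument carefully to see that a single membership $a_k\in L_k$ forces $L_k$ to absorb all the remaining $a_j$. Once that is available, the rest is the familiar ``highest-index'' manipulation — collecting the $a_M$-terms writes the relation as $a_M P+Q=0$ with $P,Q\in L_M$, and $a_M\notin L_M$ forces $P=0$, after which one recurses — together with routine bookkeeping (distinctness of the reduced index sets, the empty-set base case, and the nonvanishing of $a$).
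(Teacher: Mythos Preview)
Your proof is correct and follows essentially the same route as the paper: both isolate the largest index $m$ appearing in a putative relation, rewrite it as $s\,\sigma^m(a)+t=0$ with $s,t\in E(\sigma^i(a)\mid i<m)$, and use the $\sigma$-iteration argument (your Step~1, the paper's final paragraph) to show that $\sigma^m(a)\in E(\sigma^i(a)\mid i<m)$ would force $E(\sigma^n(a)\mid n\in\N)$ to be finitely generated. The only difference is organizational---you isolate the fact $a_k\notin L_k$ as a preliminary lemma and descend by induction on $M$, whereas the paper argues via a shortest nontrivial relation---but the content is the same.
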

\begin{proof} Deny the conclusion, and let  $\sum\limits_{j=1}^\mu \alpha_j \sigma^{i_{1,j}} (a)\sigma^{i_{2,j}} (a)\cdots \sigma^{i_{n,j}}(a)=0$ for some $\mu\in \N$ and $\alpha_j\in E$. Assume that $\mu$ is the smallest integer satisfying such a relation, i.e., this relation is shortest. Put $m=\max\{i_{n,j}\mid 1\le j\le \mu \}$. Then, this dependence relation may be written in the form $$s\sigma^m(a)+t=0, \mbox{ where } s,t\in E(\sigma^i(a)\mid i<m).$$ 
By the choice of $\mu$, we have $s\ne 0$, which implies that 
$$\sigma^m(a)=s^{-1}t\in E(\sigma^i(a)\mid i\le m-1).$$
As a corollary,

$\sigma^{m+1}(a)=\sigma(\sigma^m(a))\in E(\sigma^i(a)\mid i\le m)$

\hspace*{1.44cm}$=  E(\sigma^i(a)\mid i\le m-1)(\sigma^m(a)) \subseteq  E(\sigma^i(a)\mid i\le m-1).$\\
Thus, $E(\sigma^n(a)\mid n\in \N)$ is finitely generated that contradicts to the hypothesis.	
\end{proof}
Let $K$ be a field, $\sigma$ an automorphism of $K$ and $D=K(x,\sigma)$. Assume that $a\in K$ and $N$ is a subnormal subgroup of $D^*$ containing $x$ with the following  series of subgroups  
$$N=N_n\triangleleft N_{n-1}\triangleleft \cdots \triangleleft N_0=D^*.$$ 
Without loss of generality, we can suppose that $n\geq 1$ because if $N=D^*$, then we can set $N_1=N_0=D^*$. For the convenience, we denote $a^{(i)}=\sigma^i(a)$ for $a\in D$. Consider the sets  $\{y_i=a^{(i)}x^{2n}\mid 0\le i< n \}$ and  $\{A_i=1+y_i\mid 0\le i<n \}$. 
\begin{lemma}\label{2.2} With the assumption and the symbols as above, we have 
	
		(i) The element $[{}_nx,A_0]$ belongs to the intersection between $N$ and the subgroup $\langle A_i\mid 0\le i<n\rangle$ of $D^*$ generated by all $A_i, 0\le i<n$. Also, its first and last factors are  $A_n$ and $A_{n-1}^{-1}$ respectively.
		
		(ii) The element $[{}_nx,A_0^{-1}]$ belongs to the intersection between $N$ and the subgroup $\langle A_i\mid 0\le i<n\rangle$ of $D^*$ generated by all $A_i, 0\le i<n$. Also, its first and last factors are  $A_n^{-1}$ and $A_{n-1}$ respectively.	
\end{lemma}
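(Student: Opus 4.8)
The plan is to expand the iterated commutators $[{}_n x,A_0]$ and $[{}_n x,A_0^{-1}]$ explicitly as words in the $A_i$'s and to obtain membership in $N$ from Lemma~\ref{3.1}. It is convenient to set $A_k=1+a^{(k)}x^{2n}$ for \emph{every} $k\ge 0$; this agrees with the $A_i$ already introduced for $i<n$ and, in particular, makes sense of the element $A_n$ appearing in the statement. The one computation really needed is
$$xA_kx^{-1}=x(1+a^{(k)}x^{2n})x^{-1}=1+\sigma(a^{(k)})x^{2n}=1+a^{(k+1)}x^{2n}=A_{k+1},$$
using the multiplication rule $xc=\sigma(c)x$ and $\sigma(a^{(k)})=a^{(k+1)}$. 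Thus conjugation by $x$ shifts the index of each $A_k$ up by one, hence sends any word in the $A_i$'s to the same word with all indices raised by one.

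For part (i) I would prove by induction on $k\ge 1$ that
$$[{}_k x,A_0]=A_k\,w_k\,A_{k-1}^{-1}$$
for some (possibly empty) word $w_k$ in $A_0,\dots,A_{k-1}$. The base case is $[{}_1 x,A_0]=xA_0x^{-1}A_0^{-1}=A_1A_0^{-1}$. For the step, $[{}_{k+1}x,A_0]=[x,A_k w_k A_{k-1}^{-1}]=\bigl(x(A_k w_k A_{k-1}^{-1})x^{-1}\bigr)\bigl(A_k w_k A_{k-1}^{-1}\bigr)^{-1}=A_{k+1}(xw_kx^{-1})A_k^{-1}\cdot A_{k-1}w_k^{-1}A_k^{-1}$; since $xw_kx^{-1}$ is a word in $A_1,\dots,A_k$, everything strictly between the leftmost and rightmost letters is a word in $A_0,\dots,A_k$, while the outer letters are $A_{k+1}$ and $A_k^{-1}$. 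Taking $k=n$ shows $[{}_n x,A_0]\in\langle A_i\mid 0\le i\le n\rangle$ with first factor $A_n$ and last factor $A_{n-1}^{-1}$; and since $x\in N$ and $A_0\in D^*=N_0$, Lemma~\ref{3.1} gives $[{}_n x,A_0]\in N_n=N$. Part (ii) is entirely analogous: $A_0^{-1}\in D^*=N_0$, the induction now begins from $[{}_1 x,A_0^{-1}]=xA_0^{-1}x^{-1}A_0=A_1^{-1}A_0$ and yields $[{}_n x,A_0^{-1}]=A_n^{-1}\,\widetilde w_n\,A_{n-1}$ with $\widetilde w_n$ a word in $A_0,\dots,A_{n-1}$, and again $[{}_n x,A_0^{-1}]\in N$ by Lemma~\ref{3.1}.

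Every ingredient being an elementary group computation, I do not expect a genuine obstacle; the only thing that needs care is the bookkeeping in the induction — one must track not merely \emph{which} generators occur but the leftmost and rightmost letters at each stage. If in addition one wants $A_n$ (resp.\ $A_{n-1}^{-1}$) to be the genuine first (resp.\ last) letter of the \emph{reduced} word, one checks that no cancellation occurs at the junction $A_k^{-1}\mid A_{k-1}$ (resp.\ $A_k\mid A_{k-1}^{-1}$) where the two halves are concatenated — this is immediate once the $A_i$ are pairwise distinct, i.e.\ once $a\notin E$, which holds in every situation where this lemma is invoked, since there $a$ is chosen with $E(\sigma^k(a)\mid k\in\N)$ infinitely generated. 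The substantive step — promoting these commutators to free generators, where the exponent $2n$ enters through an estimate on $x$-degrees — is deferred to the next lemma.
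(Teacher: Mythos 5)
Your proof is correct and follows essentially the same route as the paper's: the conjugation identity $xA_kx^{-1}=A_{k+1}$, an induction on $k$ tracking the leftmost and rightmost letters of $[{}_kx,A_0]=A_kw_kA_{k-1}^{-1}$, and Lemma~\ref{3.1} for membership in $N$. Your explicit extension of the definition of $A_k$ to all $k\ge 0$ (needed to make sense of $A_n$) and the remark on non-cancellation are sensible clarifications of points the paper leaves implicit, but they do not change the argument.
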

\begin{proof} We shall prove the Part (i). The proof of the Part (ii) is similar. In view of Lemma ~\ref{3.1}, $[{}_ix,A_0] \in N_i$ for all $i$; in particular, $[{}_nx,A_0] \in N_n$. Now, we prove by induction on $i>0$ the following properties:
	
	(a) $[{}_ix,A_0]\in \langle A_1,\cdots,A_i\rangle$; and
	
	(b) $[{}_ix,A_0]$ has $A_i$ and $A_{i-1}^{-1}$ as its initial and last factor respectively.
	
	For any $i > 0$, we have			 $$xA_ix^{-1}=x(1+a^{(i)}x^{2n})x^{-1}=1+a^{(i+1)}x^{2n}=A_{i+1},  \mbox{ and }$$
	$$xA_i^{-1}x^{-1}=(xA_ix^{-1})^{-1}=A_{i+1}^{-1}\eqno (1)$$ 
	which implies  
	$$x\langle A_1,A_2,\cdots,A_i\rangle x^{-1}\subseteq \langle A_1,A_2,\cdots,A_{i+1}\rangle.\eqno (2)$$
We see that for $i=1$, (a) and (b) are true in view of the following equalities:	
	$$[{}_1x,A_0]=(xA_0x^{-1})A_0^{-1}=A_1A_0^{-1}.$$
Assume that (a) and (b) hold for $0 < i < n$. We shall prove that they hold for $i+1$. Indeed, by induction assumption and in view of (1), we have
	$$[{}_{i+1}x,A_0]=[x,[{}_ix,A_0]]=(x[{}_ix,A_0]x^{-1})[{}_ix,A_0]^{-1}\in \langle A_1,A_2,\cdots,A_{i+1}\rangle.$$
Also, $[{}_ix,A_0]=A_iBA_{i-1}$.  So, in view of (1), it follows 
	
	$[{}_{i+1}x,A_0]=x(A_iBA_{i-1}^{-1})x^{-1}(A_iBA_{i-1}^{-1})^{-1}$
	
	\hspace*{1.55cm}$=(xA_ix^{-1})xBA_{i-1}^{-1}x^{-1}A_{i-1}B^{-1}A_i^{-1}$
	
	\hspace*{1.55cm}$=A_{i+1}xBA_{i-1}^{-1}x^{-1}A_{i-1}B^{-1}A_i^{-1}.$
Thus, (a) and (b) both hold for $i+1$, and the proof is now complete.
\end{proof}
Let $R$ be a ring and $\sigma$ be an automorphism of $R$. The ring  $$R[[x,\sigma]]=\Big\{\alpha =\sum\limits_{i=n}^\infty a_ix^i\mid n\in \Z, a_i\in R \Big\}$$
consisting of formal Laurent series over $R$ with the usual addition and the multiplication defined formally with the rule $xa=\sigma(a)x$ is called a \textit{skew Laurent series ring}. Clearly, $R[x, \sigma]$ is a subring of $R[[x, \sigma]]$. If  $R=K$ is a field, then  $K[[x,\sigma]]$ is a division ring \cite[Example 1.8]{lam} and  $K(x,\sigma)$ can be considered as a division subring of  $K[[x,\sigma]]$ via obvious injection. In particular, we have the following remark.

\begin{remark}\label{r2.4} If $a\in K^*$, then the element $(1+ax^m)^{-1}\in K(x, \sigma)$ can be written in the form:
	$$(1+ax^m)^{-1}=\sum\limits_{i=0}^{\infty}(-1)^ia^ix^{mi} =1-ax^m+ \sum\limits_{i=2}b_ix^{mi}$$ in $K[[x,\sigma]]$ with $b_i\ne 0$ for all $i\geq 2$.
\end{remark}
\begin{theorem}\label{2.3} Let $K$ be a field, and $\sigma$  an automorphism of $K$ with fixed subfield $E=\{a\in K\mid \sigma (a)=a\}$. 
Assume that $D=K(x,\sigma)$ is the quotient division ring of $K[x,\sigma]$, and at least one of the following conditions holds:	
	
	(i) There exists an element $a\in K$ such that $E(a,\sigma(a),\sigma^2(a),\cdots )$ is infinitely generated.
	
	(ii) There exists an element $a\in K\backslash E$ which is algebraic over $E$.
	
If $N$ is a subnormal subgroup of $D^*$ containing $x$, then $N$ contains a non-abelian free subgroup. 
\end{theorem}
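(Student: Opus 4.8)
The plan is to exhibit two elements of $N$ that generate a non-abelian free group, using the embedding $D=K(x,\sigma)\hookrightarrow K[[x,\sigma]]$ together with Lemmas~\ref{3.1} and~\ref{2.2}. Fix $a$ as in (i) or (ii) and put $A_i=1+\sigma^i(a)x^{2n}$ for $i\in\Z$, so that $xA_ix^{-1}=A_{i+1}$ and, since $x\in N$, the group $N$ is invariant under conjugation by $x$. By Lemma~\ref{3.1} applied to the given series of length $n$, both commutators $u:=[{}_nx,A_0]$ and $v:=[{}_nx,A_0^{-1}]$ lie in $N$, and by Lemma~\ref{2.2} they lie in the subgroup generated by $A_0,A_1,\dots,A_n$, with $u$ starting at $A_n$ and ending at $A_{n-1}^{-1}$, and $v$ starting at $A_n^{-1}$ and ending at $A_{n-1}$. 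Everything then reduces to one statement, the \emph{Claim}: the elements $A_0,A_1,\dots,A_n$ freely generate a free subgroup of $D^*$.

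Granting the Claim, I would conclude as follows. Since $a\notin E$ in both cases, the $A_i$ are pairwise distinct, and $u,v$ are non-trivial reduced words in the free group $\langle A_0,\dots,A_n\rangle$ with the prescribed first and last letters. Writing any reduced word in $\{u^{\pm 1},v^{\pm 1}\}$ as a word in the $A_i$ and inspecting the junctions between consecutive blocks $u^{\pm 1},v^{\pm 1}$, one checks that every last letter of a block is a power of $A_n$ or $A_{n-1}$ and that the only junctions at which such a cancellation could occur are exactly the forbidden ones ($uu^{-1}$, $vv^{-1}$, and so on); hence the word is already reduced in $\langle A_0,\dots,A_n\rangle$, and therefore non-trivial. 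Thus $\langle u,v\rangle$ is free of rank $2$ and contained in $N$, as desired.

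So the real content is the Claim, and this is where hypotheses (i), (ii) enter, in genuinely different ways. Under (i), the hypothesis forces $\sigma$ to have infinite order on $E(a,\sigma(a),\dots)$, and Lemma~\ref{2.1} gives that the products $\sigma^{j_1}(a)\cdots\sigma^{j_k}(a)$ with $j_1<\cdots<j_k$ are linearly independent over $E$ (in particular the $\sigma^j(a)$ themselves are). I would prove the Claim by analysing the $x$-adic expansion in $K[[x,\sigma]]$ of a non-empty reduced word $w$ in the $A_i$: expanding the inverses via Remark~\ref{r2.4}, each homogeneous component of $w-1$ is an $E$-linear combination of products of $\sigma$-translates of $a$, and — because the exponent $2n$ is strictly larger than $n$, so that among $A_0,\dots,A_n$ all such products occurring remain pairwise distinct monomials of the kind governed by Lemma~\ref{2.1}, which are $E$-linearly independent — the vanishing of all these components would force $w$ to be the empty word. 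Under (ii) I would not use the $A_i$ at all: since $a$ is algebraic over $E$ its $\sigma$-orbit is finite, so $L:=E(a,\sigma(a),\dots)$ is a finite extension of $E$ and $\sigma|_L$ has finite order $d\ge 2$; then $D_0:=L(x,\sigma|_L)$ is a division subring of $D$ which is centrally finite and non-commutative, with centre $Z_0=\big(\mathrm{Fix}_L(\sigma)\big)(x^d)$, and $x\notin Z_0$. Now $N\cap D_0^*$ is a subnormal subgroup of $D_0^*$ containing the non-central element $x$; if it contained no non-abelian free subgroup, then by \cite[Theorem 2.2.1]{Pa_HaMaMo_14} it would possess a solvable normal subgroup of finite index, which by Stuth's theorem \cite{stuth} is central, and a short argument combining Schur's theorem with \cite{stuth} and \cite{huzu} would then force $N\cap D_0^*$ itself to be central in $D_0$, contradicting $x\notin Z_0$. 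Hence $N\cap D_0^*\subseteq N$ already contains a non-abelian free subgroup.

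The step I expect to be the main obstacle is the Claim under hypothesis (i): passing from the linear-independence statement of Lemma~\ref{2.1} to genuine freeness of $\langle A_0,\dots,A_n\rangle$ requires carefully tracking how the products of $\sigma$-translates of $a$ cancel in an arbitrary reduced word in the $A_i$ once its leading homogeneous component vanishes, and this is precisely where the choice of exponent $2n$ (rather than $n$) and the full strength of Lemma~\ref{2.1} are needed.
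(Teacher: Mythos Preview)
Your approach for case~(i) is essentially the paper's own: the same elements $A_i=1+\sigma^i(a)x^{2n}$, the same claim that $\langle A_0,\dots,A_n\rangle$ is free (proved via Lemma~\ref{2.1} and the expansion in $K[[x,\sigma]]$), and the same conclusion via Lemma~\ref{2.2} that $[{}_nx,A_0]$ and $[{}_nx,A_0^{-1}]$ freely generate inside $N$. The paper organizes the Claim in two stages---first showing that $y_0,\dots,y_n$ generate a free $E$-algebra (this is where the choice of exponent $2n>n$ and Lemma~\ref{2.1} are used, exactly as you anticipate), then deducing freeness of $\langle A_i\rangle$ by reading off a single ``top'' monomial in $y_{i_1}^{c_1}\cdots y_{i_t}^{c_t}$ from a putative relation, with a separate bookkeeping step in characteristic $p$ (writing $n_j=p^{\lambda}m$ to locate the leading term of $A_i^{n_j}$). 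Your sketch collapses these two stages; just be aware that the characteristic~$p$ case needs that extra line. Your junction analysis for why $u,v$ themselves are free of rank~$2$ is more explicit than the paper, which simply invokes Lemma~\ref{2.2} at that point.

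For case~(ii) the paper takes a shorter route than you do. After observing (via case~(i)) that one may assume the $\sigma$-orbit of $a$ generates a finite extension, it notes that $\sigma$ has finite order $d$, hence $x^d$ is central and the ambient skew-field is centrally finite; then a single citation of \cite[Theorem~2.1]{Pa_Go_84} (non-central subnormal subgroups of centrally finite division rings contain non-abelian free subgroups) finishes the job. Your detour through \cite{Pa_HaMaMo_14}, Stuth, Schur and \cite{huzu} re-proves exactly that cited result, so it works but is unnecessarily long. On the other hand, your passage to the subring $D_0=L(x,\sigma|_L)$ with $L=E(\sigma^i(a):i\ge 0)$ is more careful than the paper's phrasing, which asserts that ``$K/E$ is a finite extension''---strictly speaking only $L/E$ is forced to be finite, so your restriction to $D_0$ is the cleaner formulation.
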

\begin{proof} As in the proof of Lemma \ref{2.2}, for $a\in K$ and $i\in \N$, we denote  $a^{(i)}=\sigma^i(a)$. Consider a series of subgroups
	$$N=N_n\triangleleft N_{n-1}\triangleleft \cdots \triangleleft N_0=D^*.$$ 
	Without loss of generality, we can suppose that $ n\geq 1$ because if it is necessary, we can put $N_1=N_0=D^*$.
	
	(i) Assume that there exists $a\in K$ such that  $E(a^{(i)}\mid i\in \N)$ is infinitely generated. We shall prove that the elements  $[{}_nx,A_0], [{}_n,A_0^{-1}]$ defined as in Lemma \ref{2.2} generate a non-abelian free subgroup in  $N$.
	
	Firstly, we claim that the subset 
	$\{y_i=a^{(i)}x^{2n}\mid 0\le i\le n \}$ generates a free $E$-algebra. Indeed, deny the conclusion, that is, there exists a non-zero polynomial $F(z_0,z_1,\cdots,z_n)$ in the free $E$-algebra in $n+1$ indeterminates $z_0,z_1,\cdots, z_n$ such that $F(y_0,y_1,\cdots, y_n)=0.$ We assume that $$F(z_0,z_1,\cdots,z_n)= F_1(z_0,z_1,\cdots,z_n)+F_2(z_0,z_1,\cdots,z_n)+\cdots+ F_\mu(z_0,z_1,\cdots, z_n),$$ where $F_\ell(z_0,z_1,\cdots,z_n)=\sum\limits_{j_1+j_2+\cdots+j_\nu=\ell} \alpha_{j_1,j_2,\cdots,j_\nu} z_{\ell_1}^{j_1}z_{\ell_2}^{j_2}\cdots z_{\ell_\nu}^{j_\nu}$ is a homogeneous polynomial of degree $\ell$ with coefficients in $E$  for every $1\le \ell\le \mu$. Let $s$ be a positive integer such that $F_s(z_0,z_1,\cdots,z_n)$ is non-zero (the existence of $s$ is obvious since $F$ is non-zero). 
	Observe that the degree of $F_\ell (y_0,y_1,\cdots,y_n)$ in the indeterminate $x$ is $2n\ell$, so $F(y_0,y_1,\cdots, y_n)=0$ if and only if $F_\ell(y_0,y_1,\cdots, y_n)=0$ for every $1\le \ell\le \mu$. In particular, $$0=F_s(y_0,y_1,\cdots,y_n)=\sum\limits_{j_1+j_2+\cdots+j_\nu=s} \alpha_{j_1,j_2,\cdots,j_\nu} y_{\ell_1}^{j_1}y_{\ell_2}^{j_2}\cdots y_{\ell_\nu}^{j_\nu}.$$ After moving all coefficients to the left and the indeterminate $x$ to the right, one has  
	$$F_s(y_0,y_1,\cdots,y_n)=\sum\limits_{j_1+j_2+\cdots+j_\nu=s} \alpha_{j_1,j_2,\cdots,j_\nu}b_{j_1,j_2,\cdots,j_\nu}x^{2ns},$$
	where 
	$b_{j_1,j_2,\cdots,j_\nu}=a^{(j_1)} a^{(j_1+2n)}\cdots a^{(j_1+2(j_1-1)n)} a^{(j_2+2j_1n)}\cdots$
	
	\hspace*{2.5cm}$\cdots a^{(j_2+2j_1n+2n)}\cdots a^{(j_\nu+2(j_\nu-1)n+2j_{\nu-1}n+\cdots 2j_1n)}$.\\
This implies that  $\sum\limits_{j_1+j_2+\cdots+j_\nu=s}\alpha_{j_1,j_2,\cdots,j_\nu} b_{j_1,j_2,\cdots,j_\nu}=0$. Since every coefficient $b_{j_1,j_2,\cdots,j_\nu}$ belongs to $S$, by Lemma~\ref{2.1}, $\alpha_{j_1,j_2,\cdots,j_\nu}=0$ which contradicts to the assumption that  $F_s(z_0,z_1,\cdots,z_n)$ is non-zero.

	Next, we show that the set $\{A_i=1+y_i\mid 0\le i\le n \}$ generates a free subgroup in $(K(x,\sigma))^*$. Assume that there is a relation $$A_{i_1}^{n_1}A_{i_2}^{n_2}\cdots A_{i_t}^{n_t}=1,$$ where $0\le i_j<n$ and all $n_j\ne 0$. We seek a contradiction by describing the representation of this relation in  $K[[x,\sigma]]$. First of all, we need to know the representation of $A_i^j$  in $K[[x,\sigma]]$ with $j\in \Z^*$.  If $j> 0$, then $A_{i}^{j}$ is a polynomial in $K[x,\sigma]\subseteq K[[x,\sigma]]$, so we can write $A_{i}^{j}=1+y_i+A_i'$ in which either $A_i'=0$ (in case $j=1$) or the degree of $A_i'$ is greater than $2n$ (in the indeterminate $x$). If $j<0$, then by Remark~\ref{r2.4}, one has $A_i^{-1}=1-y_i + y_i^2+\cdots\in K[[x,\sigma]]$. Hence, if $\Char(D)=0$, then $A_i^j=1-y_i+A_i'$ where $A_i'$ is a series in $K[[x,\sigma]]$ whose degree of the first term is greater than $2n$. If $\Char(K)=p>0$, then we write $j=p^\lambda j_1$ with $(p,j_1)=1$. Observe that
	$$A_i^j=(1-y_i+y_i^2+\cdots)^j=((1-y_i+y_i^2+\cdots)^{p^\alpha})^{j_1}$$$$=(1-y_i^{p^\alpha}+y_i^{2p^\alpha}+\cdots)^{j_1}=1+j_1y_i^{p^\alpha}+A_i',~$$ 
where, $A_i'$ is a series in $K[[x,\sigma]]$ whose degree of the first term is greater than $2np^\alpha$. Hence, in all cases, $A_i^j$ can be written as $A_i^j=1+b_iy_i^{c_i}+A_i'$ where $c_i\in \N$, $b_i\in E$ and $A_i'$ is either $0$ or a series in $K[[x,\sigma]]$ whose degree of the first term is greater than $2nc_i$. Therefore, the relation 
	$A_{i_1}^{n_1}A_{i_2}^{n_2}\cdots A_{i_t}^{n_t}=1$ will be written in $K[[x,\sigma]]$ as follows: $$(1+b_{1}y_{i_1}^{c_1}+A_{i_1}')(1+b_{2}y_{i_2}^{c_2}+A_{i_2}')\cdots (1+b_{t}y_{i_t}^{c_t}+A_{i_t}')=1,$$ where $c_i>0$ and $b_i\in E^*$.  After expanding the left  side, we see that there is a unique product $b_1y_{i_1}^{c_1}b_2y_{i_2}^{c_2}\cdots b_ty_{i_t}^{c_t}=b_1b_2\cdots b_t y_{i_1}^{c_1}y_{i_2}^{c_2}\cdots y_{i_t}^{c_t}$, hence, this product is $0$ which is a contradiction since $\{y_0,y_1,\cdots,y_{n}\}$ generates a free $E$-algebra. Thus, $\{A_i=1+y_i\mid 0\le i\le n \}$ generates a free subgroup in $D=K(x,\sigma)$. As a corollary, $[{}_nx,A_0], [{}_nx,A_0^{-1}]\in\langle A_i\mid 0\le i\le n\rangle$ generates a free subgroup in $N$ by Lemma~\ref{2.2}.

	(ii) Assume that $a\in K\backslash E$ and $a$ is algebraic over $E$. Note that $\sigma (a)$ is also algebraic over $E$ because $E$ is the fixed field of $\sigma$. By induction, it is easy to see that $a^{(i)}=\sigma^i(a)$ is algebraic over $E$ for any $i\in \N$. In view of Part 1, we can assume that $E(a^{(i)}\mid i\in \N)$ is finitely generated. This implies that $K/E$ is a finite extension, so the Galois group $\Gal(K/E)$ is finite. If $d$ is the order of $\sigma$, then we have  $x^db=\sigma^d(b)x^d=bx^d$ for any $b\in K.$ Hence, $x^d$ belongs to the center $Z$ of $K(x,\sigma)$. It follows that every element of $K(x,\sigma)$ can be written in the form 
	$$\alpha_1+\alpha_2x+\cdots+\alpha_{d-1}x^{d-1},$$ with $\alpha_i\in Z$. This means that $K(x,\sigma)$ is a finite dimensional vector space over $Z$. By \cite[Theorem 2.1]{Pa_Go_84}, $N$ contains a non-abelian free subgroup.
\end{proof}	
\section{The existence of free subgroups in $D(x,y)^*$}

Let $D$ be a division ring with center $F$.  
Assume that $x,y$ are two commuting indeterminates and they also commute with all elements from $D$. We denote by $D(x,y)$ the division ring of fractions of the polynomial ring $D[x,y]=D[x][y]$ in $x,y$ over $D$; by
$R=D[x][[y]]$ the  power series ring in $y$ over the polynomial ring $D[x]$.
Let $N$ be a subnormal subgroup of $D(x,y)^*$ and assume that there exist non-commuting elements $a,b\in N\cap D$ such that $c=b^{-1}ab$ commutes with $a$.   
		
Let $D[[y]]$ and $x$ act on $R$ by left multiplication and   via right multiplication by $a$ respectively, that is $f(y)\cdot \alpha=f(y)\alpha$ and $d\cdot x=xa$ for any $f(y)\in D[[y]], \alpha\in R, d\in D$. With this left action, $R$ is a left $R$-module. 
Assume that $K$ is a commutative subring of $R$ containing $a$. Then, it is shown in \cite{Pa_FeGoMa_05} that $R$ is an $(R,K)$-bimodule (left $R$-module and right $K$-module). If $M=1K+bK$ is the  right $K$-submodule of $R$ generated by $\{1,b\}$ and $S=\{ r\in R\mid r\cdot M\subseteq M \}$, then we can show that (or see \cite[Section 5]{Pa_Go_Pa_15}) $M$ is a free right $K$-module with the basis $\{1,b\}$ and $S$ is a subring of $R$ containing $K[[y]]$. This implies that the map $$\phi : S\to \End(M_K)\cong \M_2(K), r\mapsto \phi_r,$$ where $\phi_r: M\to M, \phi_r(m)=rm$, for every $m\in M$, is a ring homomorphism. Moreover, the image $\phi_a$ of $a$ via $\phi$ is the matrix  $\left[ {\begin{array}{cc}
 	a & 0 \\
 	0 & c \\
 	\end{array} } \right]$. Now consider four elements $$d_{11}=−b(ab-ba)^{-1} (bab^{-1}-x);$$ 
 $$d_{12}=(ab-ba)^{-1}(a-x);$$ $$d_{21}=-b^2(ab-ba)^{-1}(bab^{-1}-x);$$ and $$d_{22}=b(ab-ba)^{-1}(a-x).$$ Then, we have $d_{11}, d_{12},d_{21}, d_{22}\in S$ and 
 $$\phi_{d_{11}}=\left[ {\begin{array}{cc}
 	1 & 0 \\
 	0 & 0 \\
 	\end{array} } \right];\phi_{d_{12}}=\left[ {\begin{array}{cc}
 	0 & 1 \\
 	0 & 0 \\
 	\end{array} } \right];\phi_{d_{21}}=\left[ {\begin{array}{cc}
 	0 & 0 \\
 	1 & 0 \\
 	\end{array} } \right];\text{ and } \phi_{d_{22}}=\left[ {\begin{array}{cc}
 	0 & 0 \\
 	0 & 1 \\
 	\end{array} } \right].$$ We also conclude (or see \cite[Section 5]{Pa_Go_Pa_15}) $\phi(y)=\left[ {\begin{array}{cc}
 	y & 0 \\
 	0 & y \\
 	\end{array} } \right]$, hence, both $1+yd_{12}$ and $1+yd_{21}$ are invertible in $S$ and the images of $1+yd_{12}$ and $1+yd_{21}$ via $\phi$ are $\phi_{d_{12}}=\left[ {\begin{array}{cc}
 	1 & y \\
 	0 & 1 \\
 	\end{array} } \right]$ and $\phi_{d_{12}}=\left[ {\begin{array}{cc}
 	1 & 0 \\
 	y & 1 \\
 	\end{array} } \right]$ respectively.
 \begin{lemma}\label{3.2} For any $n\ge 1$, there exists $u\in S$ such that $u$ is invertible in $S$ and the image of $u$ via $\phi$ is $$\phi(u)=\left[ {\begin{array}{cc}
 		{1+(1-a^{-1}c)^{n+1}y} & {(1-a^{-1}c)^{n+1}y} \\
 		{-(1-a^{-1}c)^{n+1}y} & {1-(1-a^{-1}c)^{n+1}y} \\
 		\end{array} } \right].$$
 \end{lemma}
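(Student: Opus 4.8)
The plan is to produce $u$ directly in the form $1+y\rho$ with $\rho\in S$, so that its invertibility in $S$ follows verbatim from the argument already given for $1+yd_{12}$ and $1+yd_{21}$, while $\phi(u)$ is simply read off from $\phi(\rho)$. The point to notice is that, writing $\varepsilon:=1-a^{-1}c$ (which is central in $D$ — this follows from the way $a,b$ are chosen, and is compatible with the symmetric form of the matrix in the statement — so in particular $\varepsilon\in K$) and $E_{ij}$ for the matrix units (so that $\phi_{d_{ij}}=E_{ij}$), the matrix displayed in the statement equals $I+\varepsilon^{n+1}yE$, where
$$E:=E_{11}+E_{12}-E_{21}-E_{22}=\begin{bmatrix}1&1\\-1&-1\end{bmatrix}$$
satisfies $E^{2}=0$. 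Since $\phi$ is a ring homomorphism, the element $w:=d_{11}+d_{12}-d_{21}-d_{22}$ of $S$ has $\phi(w)=E$.

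Now set $u:=1+\varepsilon^{n+1}y\,w$. Because $\varepsilon^{n+1}\in K\subseteq K[[y]]\subseteq S$, $y\in S$, $w\in S$ and $S$ is a ring, we have $\rho:=\varepsilon^{n+1}w\in S$, hence $u=1+y\rho\in S$. Using $\phi(y)=yI$ and the fact that $\phi(\varepsilon^{n+1})=\varepsilon^{n+1}I$ is scalar (this is exactly where centrality of $\varepsilon$ enters, and it is what prevents a conjugate $b^{-1}\varepsilon^{n+1}b$ from appearing in the bottom row) one computes
$$\phi(u)=I+\varepsilon^{n+1}y\,\phi(w)=I+\varepsilon^{n+1}yE=\begin{bmatrix}1+\varepsilon^{n+1}y&\varepsilon^{n+1}y\\-\varepsilon^{n+1}y&1-\varepsilon^{n+1}y\end{bmatrix},$$
the required image. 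Finally $u=1+y\rho$ with $\rho\in S$, so it is invertible in $S$ by the same reasoning that proves $1+yd_{12}$ and $1+yd_{21}$ invertible in $S$: the formal inverse $\sum_{k\ge0}(-y\rho)^{k}$ has partial sums in $S$ and converges $y$-adically to an element of $S$ (consistently, its image $\sum_{k\ge0}(-\varepsilon^{n+1}yE)^{k}=I-\varepsilon^{n+1}yE$ is $\phi(u)^{-1}$, since $E^{2}=0$). This yields the desired $u$.

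The step that really requires care is this last one, i.e.\ the principle that $1+y\rho$ is invertible in $S$ for every $\rho\in S$; it rests on $R=D[x][[y]]$ being $y$-adically complete together with the closedness of $S$ in $R$, and I would isolate it as a preliminary lemma, since it is already implicitly used for $1+yd_{12},1+yd_{21}$. Apart from that, the only thing to track is the centrality of $\varepsilon=1-a^{-1}c$, which is what makes $\phi(\varepsilon^{n+1})$ scalar and hence makes the simple choice $u=1+\varepsilon^{n+1}yw$ work; without it one would be forced to conjugate a lower‑triangular elementary matrix by a unit of $S$ whose image has non-diagonal constant term, and the structure of $S$ (whose units are products of $a^{\pm1}$ and elements $1+y\rho$) does not provide such a unit.
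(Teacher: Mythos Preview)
Your argument rests on the assertion that $\varepsilon=1-a^{-1}c$ is central in $D$, and this is where it breaks down. The hypotheses on $a,b,c$ are only $c=b^{-1}ab$, $ac=ca$ and $a\ne c$. Writing $z=a^{-1}c=[a,b]$ (the group commutator), the condition $ac=ca$ is exactly $[a,z]=1$; nothing in the setup forces $[b,z]=1$. Concretely, in the Malcev--Neumann division ring of the torsion-free nilpotent group $\langle a,b\mid [a,[a,b]]=1\rangle$ of class~$3$, one has $[a,[a,b]]=1$ but $[b,[a,b]]\ne 1$, so $\varepsilon$ does not commute with $b$, let alone lie in the center of $D$. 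Consequently two of your steps fail simultaneously: first, $\varepsilon^{\,n+1}$ need not belong to $S$ at all (since $\varepsilon\cdot b=\varepsilon b\notin K+bK$ in general), so $u=1+\varepsilon^{\,n+1}yw$ is not known to lie in $S$; second, even granting $\varepsilon\in S$, the image $\phi(\varepsilon)$ is the matrix of left multiplication by $\varepsilon$ on $M=1K+bK$, which equals $\varepsilon I$ only when $\varepsilon b=b\varepsilon$. So your computation $\phi(u)=I+\varepsilon^{\,n+1}yE$ is not justified.

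This is precisely the difficulty that the paper's proof sidesteps by invoking \cite[Lemma~5.7]{Pa_Go_Pa_15}: that lemma does \emph{not} say that $fd_{ij}\in S$ with $\phi(fd_{ij})=fE_{ij}$ for $f\in K_0$, but rather that for each such $f$ and each pair $(i,j)$ there exists \emph{some} element $f'\in S\cap D[x]$ (built in a more delicate way) with $\phi(f')=fE_{ij}$. Summing four such elements for $f=(1-a^{-1}c)^{n+1}$ yields a $\rho\in S$ with $\phi(\rho)=\varepsilon^{\,n+1}E$, and then $u=1+y\rho$ works. Your invertibility argument for $1+y\rho$ is fine once $\rho\in S$ is secured; the gap is entirely in producing $\rho$ and identifying its image.
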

 \begin{proof} The element $u$ is constructed by modifying  the construction of $w$ in \cite[Proposition 5.8]{Pa_Go_Pa_15}. 
 	Let $K_0=F[a,a^{-1},c,c^{-1}]$ be the subring of $D$ generated by $\{a,a^{-1},c,c^{-1}\}$ and $K=K_0[[y]]$ be the subring of $D[[y]][x]$ generated by all series in indeterminate $y$ whose coefficients are in $K_0$.  We have $$\left[ {\begin{array}{cc}
 		{1+(1-a^{-1}c)^{n+1}y} & {(1-a^{-1}c)^{n+1}y} \\
 		{-(1-a^{-1}c)^{n+1}y} & {1-(1-a^{-1}c)^{n+1}y} \\
 		\end{array} } \right]=\I_2+ (a^{-1}c)^{n+1}y \left[ {\begin{array}{cc}
 		{1} & {1} \\
 		{-1} & {-1} \\
 		\end{array} } \right]$$$$=
 	\I_2+\phi(y)(1+a^{-1}c)^{n+1} (\phi(d_{11})+\phi(d_{12})-\phi(d_{21})-\phi(d_{22})).$$
 	According to \cite[Lemma 5.7]{Pa_Go_Pa_15},  for every $1\le i,j\le 2$ and for every $f\in F_0$, there exists an element $f'\in S\cap D[x]$ such that $\phi(f')=f\phi(d_{ij})$, which implies that there exists an inverse element $u\in S$ of $\I_2+\phi(y)(1+a^{-1}c)^{n+1} (\phi(d_{11})+\phi(d_{12})-\phi(d_{21})-\phi(d_{22}))$. Thus, $\phi(u)=\left[ {\begin{array}{cc}
 		{1+(1-a^{-1}c)^{n+1}y} & {(1-a^{-1}c)^{n+1}y} \\
 		{-(1-a^{-1}c)^{n+1}y} & {1-(1-a^{-1}c)^{n+1}y} \\
 		\end{array} } \right].$
 \end{proof}
 The following lemma is the key result in this section.
 \begin{proposition}\label{3.3} Let $D$ be a division ring, $D(x,y)$ the quotient division ring of the polynomial ring $D[x,y]$ in two commuting indeterminates $x,y$ and $N$ a subnormal subgroup of $D(x,y)^*$ with a series of subgroups $$N=N_n\triangleleft N_{n-1}\triangleleft \cdots \triangleleft N_0=D^*.$$ 
 Assume that  $a\in N\cap D$ and $b,c\in D$ with $ab=bc$, $a\ne c$ and $ac=ca$. Let $u$ be the element in Lemma~\ref{3.2}. Put $$v=[{}_na,1+y(ab-ba)^{-1}(a-x)] \text { and } w=[{}_na,1-yb^2(ab-ba)^{-1}(bab^{-1}-x)].$$ 
 		\begin{enumerate}
 			\item If $\Char(D)=0$, then $v,w$ generates a free subgroup in $N$.
 			\item If $\Char(D)>0$, then $[{}_nv,u]$ and $[{}_nw,u]$ generates a free subgroup in $N$.
 		\end{enumerate}
 		In particular, $N$ contains a non-abelian free subgroup.
 \end{proposition}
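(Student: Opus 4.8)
\emph{Strategy and membership.} The plan is to transport the whole configuration through the ring homomorphism $\phi\colon S\to\M_2(K)$ constructed above — where $K=K_0[[y]]$ and $K_0=F[a,a^{-1},c,c^{-1}]$ — and to detect freeness at the level of $2\times2$ matrices over the commutative ring $K$. First I would check membership in $N$. The elements $1+yd_{12}=1+y(ab-ba)^{-1}(a-x)$ and $1+yd_{21}=1-yb^{2}(ab-ba)^{-1}(bab^{-1}-x)$ lie in $D[x,y]$ with constant term $1$ in $y$, hence are units of $D(x,y)$; since $a\in N$, Lemma~\ref{3.1}, applied to the given subnormal series of length $n$, gives $v=[{}_na,1+yd_{12}]\in N$ and $w=[{}_na,1+yd_{21}]\in N$. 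In characteristic $p>0$ the construction of $u$ in Lemma~\ref{3.2} (via \cite[Lemma 5.7]{Pa_Go_Pa_15}) exhibits $u$ as the inverse of an element of $D[x,y]$ with constant term $1$ in $y$, so $u\in D(x,y)^{*}$, and a second application of Lemma~\ref{3.1} (now to $v,w\in N$ and $u$) gives $[{}_nv,u],[{}_nw,u]\in N$.

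\emph{The $\phi$-images.} Each of $a,\,1+yd_{12},\,1+yd_{21},\,u$ is a unit of $S$, and as $\phi$ is a unital ring homomorphism it restricts to a group homomorphism on units that intertwines iterated commutators. From $\phi(a)=\diag(a,c)$, the images of $1+yd_{12}$ and $1+yd_{21}$ recorded above, and $ac=ca$, a short induction on $i$ yields
$$\phi([{}_ia,1+yd_{12}])=\begin{bmatrix}1&(ac^{-1}-1)^{i}y\\0&1\end{bmatrix},\qquad\phi([{}_ia,1+yd_{21}])=\begin{bmatrix}1&0\\(ca^{-1}-1)^{i}y&1\end{bmatrix};$$
taking $i=n$ gives $\phi(v)$ and $\phi(w)$, with nonzero off-diagonal entries because $a\ne c$. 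In characteristic $p$, feeding in $\phi(u)$ from Lemma~\ref{3.2} and iterating the commutator $n$ times (the $(2,1)$-entry squares at each step, after which the Fricke trace identity controls the trace) shows that $\phi([{}_nv,u])$ and $\phi([{}_nw,u])$ lie in $\SL_2(K)$, are congruent to $\I_2$ modulo $y$, and have trace $\ne2$; in fact $\operatorname{tr}\phi([{}_nv,u])=2+\bigl(h^{n+1}(ac^{-1}-1)^{n}y^{2}\bigr)^{2^{n}}$ with $h=1-a^{-1}c\ne0$.

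\emph{Freeness, characteristic $0$.} Since $K_0$ is a commutative domain, $K$ embeds into $L((y))$ with $L$ the fraction field of $K_0$, so the subgroup in question lies in $\GL_2(L(y))\subseteq\GL_2(L((y)))$; and since any relation among $v,w$ (resp. among $[{}_nv,u],[{}_nw,u]$) would descend to one among their $\phi$-images, it suffices to prove the relevant matrix pair freely generates a free group. If $\Char(D)=0$, conjugating $\phi(v),\phi(w)$ by $\diag((ac^{-1}-1)^{-n},1)$ brings them to $\left(\begin{smallmatrix}1&y\\0&1\end{smallmatrix}\right)$ and $\left(\begin{smallmatrix}1&0\\\rho y&1\end{smallmatrix}\right)$ with $\rho\in L^{*}$; specializing $y$ (say, after embedding $L$ into $\C$, to a value for which the product of the two off-diagonal entries has absolute value $\ge4$) puts the specialized pair into Sanov's ping-pong position, so a nontrivial relation would descend to one among free generators — a contradiction. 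Hence $v,w$ generate a non-abelian free subgroup of $N$.

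\emph{Freeness, characteristic $p$, and the main obstacle.} When $\Char(D)=p>0$ this argument collapses at once, because $\phi(v),\phi(w)$ are unipotent, hence of order $p$ — which is exactly why one replaces them by $[{}_nv,u],[{}_nw,u]$. Their $\phi$-images are non-unipotent ($\operatorname{tr}\ne2$) elements of $\SL_2(L((y)))$ congruent to $\I_2$ modulo $y$, whose eigenvalues have the form $1+(\text{unit})\,y^{2^{n}}+\cdots$ and are therefore of infinite order (a root of unity in $L((y))$ that is $\equiv1$ modulo $y$ equals $1$); a computation of their fixed points on $\mathbb{P}^{1}(L((y)))$ shows these are distinct and $y$-adically well separated, and then a ping-pong argument — or, in the spirit of the proof of Theorem~\ref{2.3}, an analysis of the unique lowest-order monomial of a hypothetical relation in the $y$-adic expansion — shows that $[{}_nv,u],[{}_nw,u]$ generate a non-abelian free subgroup of $N$. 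In either characteristic $N$ therefore contains a non-abelian free subgroup. I expect the genuine obstacle to be this characteristic-$p$ endgame: one must propagate the ``non-unipotent, infinite-order eigenvalues, separated fixed points'' data through the $n$ iterated commutators with $u$ and turn it into an honest ping-pong (or leading-term) proof of freeness over the non-archimedean field $L((y))$ — precisely the task for which the element $u$ of Lemma~\ref{3.2} is tailored.
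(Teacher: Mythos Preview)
Your setup, the membership arguments via Lemma~\ref{3.1}, and the inductive computation of $\phi(v),\phi(w)$ all match the paper's proof. In characteristic~$0$ the paper simply invokes \cite[Lemma~5.3]{Pa_Go_Pa_15} to conclude that the pair $\phi(v),\phi(w)$ is free, whereas you sketch a direct Sanov argument; this is fine in spirit, but your phrase ``after embedding $L$ into $\C$'' needs justification, since the center $F$ of $D$ may be an arbitrary field of characteristic~$0$. The fix is standard: a putative relation involves only finitely many elements of $L$, and the subfield of $L(y)$ they generate over $\Q$ is finitely generated, hence embeds in $\C$.

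The genuine gap is in characteristic $p>0$, exactly where you flag it. You propose to compute $\phi([{}_nv,u])$ and $\phi([{}_nw,u])$ explicitly, extract trace and eigenvalue information, separate fixed points on $\mathbb{P}^1(L((y)))$, and then run a $y$-adic ping-pong; but you do not carry this out, and the claimed trace formula and the ``$(2,1)$-entry squares at each step'' heuristic are not justified. The paper avoids this computation entirely. It never analyzes the iterated commutators at the matrix level. Instead, setting
\[
\alpha=\begin{pmatrix}0&(1-ac^{-1})^n\\0&0\end{pmatrix},\qquad \beta=\begin{pmatrix}0&0\\1&0\end{pmatrix},
\]
one has $\alpha^2=\beta^2=0$ and $\beta\alpha$ not nilpotent; then \cite[Lemma~5.2]{Pa_Go_Pa_15} gives that $\phi_v=1+y\alpha$, $\phi_w=1+y\beta\alpha\beta$, and $\phi_u=1+y(1-\beta)\alpha\beta\alpha(1+\beta)$ generate a copy of the free product $C_p*C_p*C_p$ inside $\GL_2(K)$. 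Since $\phi$ is a group homomorphism on units, any relation among $[{}_nv,u]$ and $[{}_nw,u]$ in $\langle u,v,w\rangle$ would descend to one among $[{}_n\phi_v,\phi_u]$ and $[{}_n\phi_w,\phi_u]$ inside $C_p*C_p*C_p$; but in that free product these two iterated commutators lie in the (free) kernel of the abelianization map to $C_p^3$ and are visibly not powers of a common element, so they generate a free group of rank~$2$. Thus the freeness detection is done once, for the triple $(u,v,w)$, via a black-box lemma, and the passage to $[{}_nv,u],[{}_nw,u]$ is pure combinatorics in a free product --- no trace identities, no fixed-point separation, no non-archimedean ping-pong.
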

\begin{proof} Let $F$ be the center of $D$, $K_0=F[a,a^{-1},c^{-1}]$ the subring of $D$ generated by $\{a,a^{-1},c,c^{-1}\}$ and $K=K_0[[y]]$ $F_1=F_0[[y]]$ the subring of $D[[y]][x]$ generated by all series in indeterminate $y$ whose coefficients are in $K_0$. Let $d_{12}=(ab-ba)^{-1}(a-x)$ and $d_{21}=-b^2(ab-ba)^{-1}(bab^{-1}-x)$. Then, $1+yd_{12}, 1+yd_{21}$ is invertible in $S$ and 	$v=[{}_na,1+yd_{12}]$ and $w=[{}_na,1+yd_{21}]$.

	1. Assume that $\Char(D)=0$. By Lemma~\ref{3.1}, $v$ and $w$ belong to  $N$. We may show by inductive on $n$ that the image of $v$ and $w$ via $\phi$ are
	$\phi_{v}=\left[ {\begin{array}{cc}
		1 & (1-ac^{-1})^ny \\
		0 & 1 \\
		\end{array} } \right]$ and 		$\phi_{w}=\left[ {\begin{array}{cc}
		1 & 0 \\
		(1-ac^{-1})^ny & 1 \\
		\end{array} } \right]$ respectively. Observe that $(\phi_v-1)^2=(\phi_w-1)^2=0$ and  $(\phi_v-1)(\phi_w-1)=\left[ {\begin{array}{cc}
		(1-ac^{-1})^{2n}y^2 & 0 \\
		0 & 0 \\
		\end{array} } \right] $ is not algebraic over $\Q$, so that according to
	\cite[Lemma 5.3]{Pa_Go_Pa_15},  $\phi_v=\left[ {\begin{array}{cc}
		1 & (1-ac^{-1})^ny \\
		0 & 1 \\
		\end{array} } \right]$ and	$\phi_w=\left[ {\begin{array}{cc}
		1 & 0 \\
		(1-ac^{-1})^ny & 1 \\
		\end{array} } \right]$ generates a free subgroup in $\GL_2(K)$. Hence, $v$ are $w$, the inverse images of these matrices,  generates a free subgroup in $N$.
	
	2. Assume that $\Char(D)=p>0$. Put $\alpha = \left[\begin{array}{cc}
	0 & (1-ac^{-1})^n \\
	0 & 0 \\
	\end{array}\right]$ and $\beta=\left[\begin{array}{cc}
	0 & 0 \\
	1 & 0 \\
	\end{array}\right]$. Then, $\alpha^2=\beta^2=0$, and $\beta\alpha =\left[\begin{array}{cc}
	0 & 0 \\
	0&(1-ac^{-1})^n \\
	\end{array}\right]$ is not nilpotent. Hence, by \cite[Lemma 5.2]{Pa_Go_Pa_15}, $$\phi_v=1+y\alpha, \phi_w=1+y\beta\alpha\beta \text{ and } \phi_u=1+y(1-\beta)\alpha \beta\alpha (1+\beta)$$ generates the subgroup of $\GL_2(K)$ isomorphic to the free product $C_p*C_p*C_p$, where $C_p$ is the cyclic group of order $p$. Therefore, $\{u,v,w\}$, the set of inverse images of these matrices,  also generates the subgroup $\langle u,v,w\rangle$ of $S^*$ which is isomorphic to  the free product $C_p*C_p*C_p$. Observe that both $[{}_nv,u]$ and $[{}_nw,u]$ are in $N$ since $v,w\in N$ (Lemma~\ref{3.1} and Part 1). Applying Lemma~\ref{3.1} (3) and (4), we conclude that $[{}_nv,u]$ and $[{}_nw,u]$ generates a free subgroup in $N$.
\end{proof}

\section{A subnormal subgroup containing a solvable subgroup}

\begin{theorem}\label{4.1} Let $D$ be a division ring with center $F$, and assume that $N$ is a subnormal subgroup of $D^*$ containing a non-abelian solvable subgroup.  If transcendence degree of $F$ over the prime subfield $\mathbb{P}$ is infinite, then $N$ contains a non-abelian free subgroup.	
\end{theorem}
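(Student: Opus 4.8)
The plan is to reduce the statement to the hypotheses of Proposition~\ref{3.3}: I will exhibit inside $D$ a division subring isomorphic to some $L(x,y)$, with $L$ non-commutative, in such a way that $N$ meets $L(x,y)^*$ in a subnormal subgroup carrying exactly the data $a,b,c$ required there. \emph{Reduction to the metabelian case and extraction of $a,b,c$.} Let $G\le N$ be non-abelian solvable with derived series $G=G^{(0)}\triangleright\cdots\triangleright G^{(d)}=1$, $d\ge 2$. Then $H:=G^{(d-2)}$ is non-abelian while $A:=H'=G^{(d-1)}$ is abelian, so I may replace $G$ by the metabelian group $H$. Now split into two cases. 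If some $b\in H$ fails to centralise $A$, pick $a\in A$ with $bab^{-1}\ne a$; otherwise $A\subseteq Z(H)$, so $H$ is nilpotent of class $\le 2$, and I pick $a,b\in H$ with $[a,b]\ne 1$. In either case set $c:=b^{-1}ab$. Then $a\in H\subseteq N$, $b,c\in N$, $ab=bc$ and $a\ne c$; moreover $ac=ca$, since in the first case $a,c$ lie in the abelian group $A$, and in the second $a^{-1}c=[a^{-1},b^{-1}]\in H'\subseteq Z(H)$ commutes with $a$. Since $a\ne c$, the elements $a$ and $b$ do not commute.

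\emph{Construction of $L(x,y)$ inside $D$.} Let $L$ be the division subring of $D$ generated by $a$ and $b$; it contains $c$, it is non-commutative, and it is finitely generated over the prime subfield $\mathbb{P}$. Here is where the hypothesis is used: a division ring finitely generated over its prime field has finite transcendence degree over it, so $r:=\trdeg Z(L)/\mathbb{P}<\infty$, whereas $\trdeg F/\mathbb{P}=\infty$ and $F$, being central in $D$, centralises $L$. Since $F\cdot Z(L)$ is a commutative subfield of $D$ with $\trdeg(FZ(L)/Z(L))=\trdeg(FZ(L)/\mathbb{P})-r=\infty$, the field $F$ must contain an element $x$ transcendental over $Z(L)$, and then an element $y$ transcendental over $Z(L)(x)=Z(L[x])$. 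With $x,y$ chosen in this way (and central in $D$), the subring of $D$ generated by $L\cup\{x,y\}$ is a genuine polynomial ring over $L$ in two central commuting indeterminates; it is a Noetherian domain, hence an Ore domain, and its division ring of fractions $D_2\subseteq D$ is isomorphic to $L(x,y)$.

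\emph{Conclusion.} Intersecting the subnormal series of $N$ with $D_2^*$ shows that $N\cap D_2^*$ is a subnormal subgroup of $D_2^*=L(x,y)^*$. Moreover $a\in(N\cap D_2^*)\cap L$, while $b,c\in L$ satisfy $ab=bc$, $a\ne c$, $ac=ca$. By Proposition~\ref{3.3}, $N\cap D_2^*$ — hence $N$ — contains a non-abelian free subgroup. The delicate point, where the force of $\trdeg F/\mathbb{P}=\infty$ is entirely spent, is the estimate $\trdeg Z(L)/\mathbb{P}<\infty$ for the finitely generated division ring $L$ (equivalently, that $F$ is not algebraic over the division subring generated by $a$ and $b$), which is exactly what lets the two central transcendentals $x,y$ be found; if one only assumes $F$ uncountable, this estimate is replaced by the cardinality count $|Z(L)|\le\max(\aleph_0,|\mathbb{P}|)<|F|$. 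One could instead first replace $D$ by a copy of $K(t,\sigma)$ with $t\in N$, as produced by the constructions underlying Section~2, and then split on whether $\sigma$ has finite order — invoking the theory of centrally finite division rings in that case and Theorem~\ref{2.3} otherwise — but passing directly through $L(x,y)$ as above sidesteps that case analysis.
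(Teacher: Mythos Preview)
Your extraction of $a,b,c$ with $ab=bc$, $a\ne c$, $ac=ca$ from the metabelian subgroup is clean and correct (indeed more carefully argued than in the paper itself), and once you have central elements $x,y$ genuinely transcendental over $Z(L)$ the identification of the subring they generate with $L[x,y]$ is standard: a shortest nonzero relation $\sum\ell_ix^i=0$ with $\ell_n=1$ can be conjugated by any $\alpha\in L$ and subtracted, forcing all $\ell_i\in Z(L)$ by minimality. So the architecture is fine.

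The gap is the sentence ``a division ring finitely generated over its prime field has finite transcendence degree over it, so $r:=\trdeg Z(L)/\mathbb{P}<\infty$''. This is not a standard theorem, and you give no argument for it. The object $L$ is the \emph{division closure} of $\mathbb{P}\langle a,b\rangle$ inside $D$; for commutative fields the analogous statement is trivial, but nothing in the theory of division rings bounds the transcendence degree of the center of such a closure. Countability of $L$ --- which is the only thing ``finitely generated as a division ring over $\mathbb{P}$'' yields for free --- certainly does not suffice, since $\mathbb{Q}(t_1,t_2,\dots)$ is already countable with infinite transcendence degree. Without this estimate you cannot locate $x,y\in F$ transcendental over $Z(L)$, and the construction of $L(x,y)\subseteq D$ collapses. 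You flag this yourself as ``the delicate point'', but flagging it is not proving it.

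The paper's proof is organised precisely to dodge this issue. Rather than the division closure $L$, it works with the \emph{commutative} field $K=\mathbb{P}(G)$ generated by the maximal abelian normal subgroup, where transcendence degree behaves. The cost is a genuine case split that you tried to avoid: either the relevant $\sigma$-orbit subfield is infinitely generated over $\mathbb{P}$, and Theorem~\ref{2.3} on $K(t,\sigma)$ finishes directly, or it is finitely generated, and only in that branch does one obtain the finite-transcendence-degree input needed to manufacture the two central indeterminates and invoke Proposition~\ref{3.3}. Your closing paragraph sketches exactly this route as an alternative; you should take it, because the shortcut through $Z(L)$ rests on an assertion you have not established.
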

\begin{proof} Assume that $H$ is a non-abelian solvable subgroup of $N$, so $H^{(n)}=1$ for some $n>1$. Without loss of generality, we can assume $n=2$ because if $n>2$, we can consider the subgroup $H^{(n-2)}$ instead of $H$. Therefore, the derived subgroup $H'$ is abelian. Put 
	$${\mathcal S}=\Big\{ H'\leq A\leq H\mid A \text{ abelian} \Big\}.$$
	Let $G$ be a maximal element in ${\mathcal S}$ (there exists such a $G$ by Zorn's Lemma). Then, $G$ is an abelian normal subgroup of $H$. Take $a\in H\backslash G$ and $K=\mathbb{P}(G)$. Then, $K$ is a field and  $aKa^{-1}=K$. It follows that the subring  $K[a]$ of $D$ generated by $K\cup \{a\}$ can be written in the form $$K[a]=\Big\{\sum\limits_{i=0}^n a_0+a_1a+\cdots+a_na^n\mid n\in \N, a_0\in K \Big\}.$$ 
	Also, the map $\sigma : K\to K$ defined by  $\sigma(b)=aba^{-1}$ for every $b\in K$ is an automorphism of $K$. Consider the homomorphism
	$$\Phi\colon K[t,\sigma] \to K[a], \sum\limits_{i=0}^n a_it^i\mapsto \sum\limits_{i=0}^na_ia^i,$$
	where $K[t,\sigma]$ is the skew polynomial ring in an indeterminate $t$ over $K$ corresponding to $\sigma$. It is obvious that $\Phi$ is a surjection. If $ker(\Phi)\ne 0$, then $ker(\Phi)=\langle f(t)\rangle$ is generated by polynomial $f(t)\in K[t,\sigma]$. Therefore, $K[a]\cong K[t,\sigma]/\langle f(t)\rangle$ is a finite dimensional vector space over $K$. This implies that $K[a]$ is a division subring and by  \cite[Lemma 1]{Pa_AkEbKeGo_03}, it is centrally finite. Note that $N\cap (K[a])^*$ is a non-central subnormal subgroup of $(K[a])^*$ because $a\in N\cap (K[a])^*$. Hence, by \cite[Theorem 2.1]{Pa_Go_84}, $N\cap (K[a])^*$ contains a non-abelian free subgroup. Now, assume that $ker (\Phi)=0$. Then, $\Phi$ is isomorphism, so $\Phi$ can be extended naturally to the isomorphism $\overline \Phi : K(t,\sigma)\to K(a)$.  Let $K_1$ be the subfield of $K$ generated by  $\{\sigma^i(a)\mid i\in \Z\}$. Then, the restriction of $\sigma$ on $K_1$ is the isomorphism of $K_1$. Therefore, $K_1(t,\sigma)$ is a division subring of  $K(t,\sigma)$. Put $M_1=\overline \Phi(N)\cap K_1(t,\sigma)$. Then, $M_1$ is a subnormal subgroup of $K_1(t,\sigma)^*$ containing $t$.
	
	
	If $K_1/P$ is infinitely generated, then by Theorem~\ref{2.3}, $M_1$ contains a non-abelian free subgroup. If $K_1/P$ is finitely generated, then  $\trdeg(K_1/P)$ is finite. But $\trdeg(F/P)=\infty$ implies the existence of elements $x,y\in F$ such that $x, y$ are algebraically independent over $\mathbb{P}$. By \cite[Lemma 7.1]{Pa_FeGoMa_05}, $x,y$ are also algebraically independent over $K(a)$. Then, $M=N\cap K(a)(x,y)^*$ is a non-central normal subgroup of  $K(a)(x,y)^*$ because $a\in M$. By Proposition \ref{3.3}, $M$ contains a non-abelian free subgroup. The proof of the theorem is now complete. 
\end{proof}

It is well known that every uncountable field has infinite transcendence degree over its prime subfield, so the proof of following corollary is elementary.
\begin{corollary}
	In a division ring $D$ with uncountable center, every subnormal subgroup of $D^*$ containing a non-abelian solvable subgroup contains a non-abelian free subgroup.
\end{corollary}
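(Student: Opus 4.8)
The plan is to obtain this as an immediate consequence of Theorem~\ref{4.1}: once we know that an uncountable center has infinite transcendence degree over its prime subfield, the hypothesis of Theorem~\ref{4.1} is satisfied and there is nothing more to prove. So the only point that needs to be checked is the purely field-theoretic statement that $\trdeg(F/\mathbb{P}) = \infty$ whenever $F$ is uncountable, where $\mathbb{P}$ denotes the prime subfield of $F$.

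To verify this I would argue by contraposition. First recall that $\mathbb{P}$ is countable, since it is either $\mathbb{Q}$ or a finite field $\mathbb{F}_p$. Suppose $\trdeg(F/\mathbb{P})$ were finite, say equal to $n$, and choose a transcendence basis $\{x_1,\dots,x_n\}$ of $F$ over $\mathbb{P}$. Then the rational function field $\mathbb{P}(x_1,\dots,x_n)$ is countable, being the field of fractions of the countable polynomial ring $\mathbb{P}[x_1,\dots,x_n]$, and $F$ is an algebraic extension of it. Since an algebraic extension of a countable field is again countable — each element is a root of its minimal polynomial, there are only countably many polynomials over a countable field, and each has finitely many roots — it would follow that $F$ is countable, contradicting the hypothesis. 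Hence $\trdeg(F/\mathbb{P})$ is infinite.

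With this in hand, let $N$ be a subnormal subgroup of $D^*$ containing a non-abelian solvable subgroup. Applying Theorem~\ref{4.1} to $D$, whose center $F$ has infinite transcendence degree over $\mathbb{P}$ by the above, we conclude that $N$ contains a non-abelian free subgroup, as desired.

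I do not expect any genuine obstacle in this argument: all the real content lies in Theorem~\ref{4.1}, and the remaining cardinality computation is entirely standard (one could equally well just cite a standard reference on transcendence degree instead of reproducing the short argument above).
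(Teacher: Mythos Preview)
Your proposal is correct and follows exactly the paper's approach: the paper simply notes that every uncountable field has infinite transcendence degree over its prime subfield and then invokes Theorem~\ref{4.1}. You have additionally spelled out the standard cardinality argument that the paper leaves as ``well known,'' which is fine.
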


\begin{theorem}\label{4.2}
	Let $D$ be a division ring with center $F$. Assume that $N$ is a subnormal subgroup of $D^*$ containing a non-abelian solvable subgroup $H$. If $H$ is algebraic over $F$, then $N$ is contains a non-abelian free subgroup.
\end{theorem}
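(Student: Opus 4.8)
The plan is to reuse the opening of the proof of Theorem~\ref{4.1} and then exploit the algebraicity hypothesis to land automatically in its ``easy'' case. First I would make the same reduction: as in that proof, writing $H^{(n)}=1$ and replacing $H$ by $H^{(n-2)}$ if necessary (still a non-abelian solvable subgroup of $N$, and still algebraic over $F$), we may assume $H''=1$, so that $H'$ is abelian and nontrivial. By Zorn's Lemma pick a maximal element $G$ of $\mathcal S=\{A:H'\le A\le H,\ A\ \mathrm{abelian}\}$; since $G$ contains $[H,H]$, it is normal in $H$. Fix $a\in H\setminus G$ and let $K=F(G)$ be the subfield of $D$ generated by $F\cup G$. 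The point at which the hypothesis of Theorem~\ref{4.2} enters is already here: $F[G]$ is a commutative domain algebraic over the field $F$ (because $G\subseteq H$ is algebraic over $F$), hence is itself a field, so $K=F[G]\supseteq F$. Since $a$ centralizes $F$ and normalizes $G$, conjugation $\sigma\colon b\mapsto aba^{-1}$ is an automorphism of $K$, and $\sigma\ne\operatorname{id}$ (otherwise $\langle G,a\rangle$ would be an abelian subgroup of $H$ containing $H'$ and properly containing $G$). As in Theorem~\ref{4.1} we obtain a surjective homomorphism $\Phi\colon K[t,\sigma]\to K[a]$, $t\mapsto a$.

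The decisive observation is that, since $a$ is algebraic over $F\subseteq K$, its minimal polynomial $p(X)=X^m+c_{m-1}X^{m-1}+\cdots+c_0\in F[X]$ gives a \emph{nonzero} element $p(t)=t^m+c_{m-1}t^{m-1}+\cdots+c_0$ of $\ker\Phi$, so $\ker\Phi\ne 0$. Thus we are never in the case $\ker\Phi=0$ that occupied most of the proof of Theorem~\ref{4.1} and required Theorem~\ref{2.3} and Proposition~\ref{3.3}. Since $K[t,\sigma]$ is a principal left ideal domain, $K[a]\cong K[t,\sigma]/\ker\Phi$ is a finite-dimensional left $K$-vector space, hence a division subring of $D$, and by \cite[Lemma 1]{Pa_AkEbKeGo_03} it is centrally finite. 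Now $N\cap K[a]^*$ is subnormal in $K[a]^*$ (intersect the given subnormal series of $N$ with $K[a]^*$), it contains $a$, and $a$ is non-central in $K[a]$ precisely because $\sigma\ne\operatorname{id}$. Therefore $N\cap K[a]^*$ is a non-central subnormal subgroup of the centrally finite division ring $K[a]$, so by \cite[Theorem 2.1]{Pa_Go_84} it contains a non-abelian free subgroup, and hence so does $N$.

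The one genuine subtlety, and the only place real thought is needed, is to arrange $F\subseteq K$: this is what makes the minimal polynomial of $a$ over $F$ witness $\ker\Phi\ne 0$, and it is available here precisely because algebraicity forces $F[G]$ to be a field — in Theorem~\ref{4.1} one could only use $\mathbb P(G)$, which need not contain $F$, and that is why the hard case arose there. Everything else is bookkeeping already carried out in the proof of Theorem~\ref{4.1}.
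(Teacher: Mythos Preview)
Your proof is correct and follows essentially the same path as the paper's: both reduce to the metabelian case, take a maximal abelian normal subgroup $M$ containing the derived subgroup, set $K=F[M]$ (a field because $M$ is algebraic over $F$), pick $a\in H\setminus M$, and use algebraicity of $a$ over $F\subseteq K$ to see that the division subring generated by $K$ and $a$ is finite-dimensional over $K$, hence centrally finite, so that \cite[Theorem~2.1]{Pa_Go_84} applies to its intersection with $N$. The only difference is packaging: you route through the skew-polynomial map $\Phi\colon K[t,\sigma]\to K[a]$ from Theorem~\ref{4.1} and observe that the minimal polynomial of $a$ over $F$ forces $\ker\Phi\ne 0$ (so one lands automatically in the ``easy'' case of that proof, citing \cite[Lemma~1]{Pa_AkEbKeGo_03} for central finiteness), whereas the paper argues directly that $L=\sum_{i\ge 0}Ka^i$ is a finite-dimensional $K$-space and then passes to a maximal subfield $K_1\supseteq K$ of $L$ and invokes \cite[(15.8)]{lam} to get central finiteness.
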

\begin{proof} Let $n$ be the derived length of $H$, that is, $n$ is the smallest positive  integer such that the $n$-th derived subgroup $H^{(n)}=1$. Since $H$ is non-abelian, $n\ge 2$. Put $G=H^{(n-2)}$. Then $G$ is a non-abelian metabelian subgroup. Set 
	$$\Re=\{ A\mid G'\le A\le G \text{ and } A \text{ is abelian }\}.$$ 
	By Zorn's Lemma, there exists a maximal element in $\Re$, say $M$. Then, $M$ is an abelian normal  subgroup of $G$. Put $K=F[M]$ and fix an element $a\in G\backslash M$. Clearly, $a$ normalizes $K$, that is, $aKa^{-1}=K$ and the set $L=\sum_{i\in \N} Ka^i$ is a subspace of the left vector space $D$ over a field $K$. By the fact that $a$ is algebraic over $F$, it is easy to see that $L\subseteq \sum_{i=0}^m Ka^i$ for some positive integer $m$.  Moreover, it is easy to that $aKa^{-1}=K$, which implies that $L$ is a division subring of $D$ and finite dimensional left vector space over $K$. By Zorn's Lemma, $K$ is contained in some maximal subfield $K_1$ of $L$, so $L$ is a finite dimensional left vector space over $K_1$, and in view of  \cite[(15.8)]{lam}, $L$ is centrally finite. Let $N_1=N\cap L$. Then $N_1$ is non-abelian subnormal in $L^*$. Therefore, $N_1$ contains a non-abelian free subgroup by \cite[Theorem 2.1]{Pa_Go_84}.
\end{proof}


\begin{thebibliography}{99}
\bibitem{Pa_AkEbKeGo_03} S. Akbari, R. Ebrahimian, H. M. Kermani, and A. S. Golsefidy, {Maximal subgroups of $GL_n(D)$}, \textit{J. Algebra} \textbf{259} (2003) 201--225.
	
\bibitem{Pa_BeGo_16} J. P. Bell and J. Z. Gonçalves,	Free algebras and free groups in Ore extensions and free group algebras in division rings, \textit{J. Algebra} \textbf{455} (2016) 235--250. 

\bibitem{Pa_BeRo_14}	J. P. Bell and  D. Rogalski, Free subalgebras of division algebras over uncountable fields, \textit{Math. Z.} \textbf{227} (2014) 591--609.

\bibitem{Pa_BeRo_12} P. Bell and D. Rogalski, {Free subalgebras of quotient rings of Ore extensions}, \textit{Algebra Number Theory} \textbf{6} (2012) 1349--1367.

\bibitem{Pa_FeGoMa_05} V.O. Ferreira and J.Z. Gon\c calves, A. Mandel, {Free symmetric and unitary pairs in division rings with involution}, \textit{Internat. J. Algebra Comput.} \textbf{15} (2005) 15--36.

\bibitem{Pa_Go_17} J. Z. Gon\c calves, Free groups in a normal subgroup of the  eld of fractions of a skew polynomial ring, {\em Comm. Algebra} {\bf 45} (2017), 5193--5201.

\bibitem{Pa_Go_84} J. Z. Gon\c calves, {Free groups in subnormal subgroups and the residual nilpotency of the group of units of group rings}, \textit{Canadian Math. Bull.} \textbf{27} (1984), 365--370.

\bibitem{Pa_Go_Pa_15} J. Z. Gon\c calves and D. S. Passman, {Free groups in normal subgroups of the multiplicative group of a division ring}, \textit{J. Algebra} \textbf{440} (2015) 128--144.

\bibitem{nilpotent} R. Ebrahimian, Nilpotent maximal subgroups of $GL_n(D)$, {\em J. Algebra} {\bf  280}  (2004), 244--248.
\bibitem{hai} B. X. Hai, On locally nilpotent maximal subgroups of the multiplicative group of a division ring, {\em Acta Math. Vietnam.} {\bf 36} (2011), 113--118.

\bibitem{hai-ha} B. X. Hai and D. V. P. Ha, On locally solvable maximal subgroups of the multiplicative group of a division ring, {\em Vietnam J. Math.} {\bf 38} (2010), 237--247.
\bibitem{hai-thin1} B. X. Hai and N. V. Thin, On locally nilpotent subgroups of $GL_1(D)$,  {\em Comm. Algebra} {\bf 37} (2009), 712--718.
%
\bibitem{hai-thin2}	B. X. Hai and N. V. Thin, On subnormal subgroups in general skew linear groups, {\em Vestnik St. Peterburg University, Mathematics} {\bf 46} (2013), 43--48.
%

\bibitem{Pa_HaMaMo_14}R. Hazrat, M. Mahdavi-Hezavehi, and M. Motiee, Multiplicative groups of division rings, \textit{Math. Proc. R. Ir. Acad.} \textbf{114A} (2014), 37--114.

\bibitem{hua} L. K. Hua, On the multiplicative group of a field, \textit{Acad. Sinica Science Record} \textbf{3} (1950), 1--6.

\bibitem{huzu} 	M. S. Huzurbazar, The multiplicative group of a division ring, \textit{Soviet Math. Dokl.} \textbf{1} (1960), 433--435.
%
\bibitem{lam}  T. Y. Lam,  {\em A First Course in Noncommutative Rings}, GMT  {\bf 131}, Springer, 1991.

\bibitem{Pa_Li_78} A. I. Lichtman, Free subgroups of normal subgroups of the multiplicative group of skew fields, \textit{Proc.
Amer. Math. Soc.} \textbf{7} (1978) 174--178.


%
\bibitem{stuth} C. J. Stuth, A generalization of the Cartan-Brauer-Hua Theorem, {\em Proc. Amer. Math. Soc.} {\bf 15} (1964), 211--217.
\bibitem{ls} B. A. F. Wehrfritz, Soluble and locally solvable skew linear groups, {\em Arch. Math.} {\bf 49} (1987), 379--388.
\end{thebibliography}
\end{document}